\newcommand{\href}[1]{#1} 
\definecolor{background-color}{gray}{0.98}
\definecolor{steelblue}{rgb}{0.27, 0.51, 0.71}
\definecolor{brickred}{rgb}{0.8, 0.25, 0.33}
\definecolor{bluegray}{rgb}{0.4, 0.6, 0.8}
\definecolor{amethyst}{rgb}{0.6, 0.4, 0.8}
\newtheorem{definition}{Definition}
\newtheorem{theorem}{Theorem}
\newcommand{\ve}[1]{\mathbf{#1}}           
\newcommand{\sv}[1]{\boldsymbol{#1}}   
\newcommand{\m}[1]{\mathbf{#1}}               
\newcommand{\sm}[1]{\boldsymbol{#1}}   
\newcommand{\tr}[1]{{#1}^{\mkern-1.5mu\mathsf{T}}}              
\newcommand{\conj}[1]{{#1}^{\ast}}
\newcommand{\norm}[1]{||{#1}||}              
\newcommand{\frob}[1]{\norm{#1}_F}
\newcommand{\abs}[1]{\lvert{#1}\rvert}              
\newcommand*{\mvec}{\operatorname{vec}}
\newcommand*{\trace}{\operatorname{trace}}
\newcommand{\ind}[2]{I_{#2} \left( #1 \right)}
\DeclareMathOperator*{\argmin}{arg\,min}
\newcommand{\field}[1]{\mathbb{#1}}
\newcommand{\Reals}{\field{R}}
\newcommand{\Complex}{\field{C}}
\title{Approximating real symmetric Toeplitz matrices using the nearest circulant}
\author{Chris Salahub \\ {\footnotesize University of Waterloo, \texttt{csalahub@uwaterloo.ca}}}
\begin{document}

\maketitle

\begin{abstract}
  The nearest circulant approximation of the real Toeplitz matrix $\sm{\Sigma}$ in the Frobenius norm is derived. This matrix, $\m{C}_{\Sigma}$, is symmetric. It is proven that symmetric circulant matrices are the only real circulant matrices with all real eigenvalues. The Frobenius norm of the difference $\sm{\Sigma} - \m{C}_{\Sigma}$ for the case of $\sm{\Sigma}$ displaying exponential decay is evaluated using an expression of $\sum_{n=1}^{N} n^k p^n$ in terms of the first $k$ geometric moments. Compared to a classic approximation based on \cite{grenanderszego1958}, $\m{C}_{\Sigma}$ displays dramatically better behaviour in many finite cases, though both share the same leading term for large $M$.
\end{abstract}

\section{Introduction}

A number of methods exist to approximate the symmetric Toeplitz matrix
\begin{equation} \label{eq:multipleTesting:genEigCov}
  \sm{\Sigma} = \begin{bmatrix}
    \rho_0 & \rho_1 & \rho_2 & \dots & \rho_{M-1} \\
    \rho_1 & \rho_0 & \rho_1 & \dots & \rho_{M-2} \\
    \rho_2 & \rho_1 & \rho_0 & \dots & \rho_{M-3} \\
    \vdots & \vdots & \vdots & \ddots & \vdots \\
    \rho_{M-1} & \rho_{M-2} & \rho_{M-3} & \dots & \rho_0
  \end{bmatrix},
\end{equation}
where $\rho_0, \dots, \rho_{M-1} \in \Reals$. $\sm{\Sigma}$ appears regularly in statistical applications such as the correlation matrix of the discrete autoregressive process of order one, in information theory in certain filtering tasks \cite{gray2006toeplitz}, and in genetics in the correlation between disjoint measured sequences \cite{salahub2022correlation}. In the case of the equidistant genetic survey of \cite{LanderBotstein1989} and the autoregressive time series, the matrix has the more specific form
\begin{equation} \label{eq:multipleTesting:specEigCov}
  \sm{\Sigma}_e = \begin{bmatrix}
    1 & \rho & \rho^2 & \dots & \rho^{M-1} \\
    \rho & 1 & \rho & \dots & \rho^{M-2} \\
    \rho^2 & \rho & 1 & \dots & \rho^{M-3} \\
    \vdots & \vdots & \vdots & \ddots & \vdots \\
    \rho^{M-1} & \rho^{M-2} & \rho^{M-3} & \dots & 1
  \end{bmatrix}
\end{equation}
where $\rho \in [0, 1]$ is a real constant.

Of interest in many applications are the eigenvalues and eigenvectors of $\sm{\Sigma}$, call them $\ve{V}_1, \ve{V}_2, \dots, \ve{V}_M$ and $\lambda_1 \geq \lambda_2 \geq \dots \geq \lambda_M$ respectively. For brevity, the combination of these vectors and values is here referred to as the \emph{eigensystem} of $\sm{\Sigma}$. \cite{gray2006toeplitz} demonstrates the application of this particular eigensystem in signal processing, while \cite{cheverud2001}, \cite{LiJi2005}, and \cite{Galwey2009} use the eigenvalues to adjust for dependent multiple tests in the genomic context.

The typical approach to obtain is eigensystem is to approximate $\sm{\Sigma}$ with a matrix with a solved eigensystem. \cite{cheverud2001}, \cite{LiJi2005}, and \cite{Galwey2009} present different approximations based on the matrix
\begin{equation} \label{eq:multipleTesting:commonCor}
  \m{A}(\rho) = \rho \ve{1} \tr{\ve{1}} + (1 - \rho) \m{I}_M.
\end{equation}
$\m{A}(\rho)$ has a known eigensystem for any $\rho$ and matches the eigensystem of $\sm{\Sigma}$ for the edge cases where $\rho_1 = \rho_2 = \dots = 0$ or $\rho_1 = \rho_2 = \dots = 0$ and $\rho_0 = 1$.

\cite{gray2006toeplitz} and \cite{grenanderszego1958} instead utilize circulant matrices to approximate the eigensystem of $\sm{\Sigma}$. Asymptotically, $\sm{\Sigma}$ has the same eigensystem as certain circulant matrices and the eigensystem of any circulant matrix is known.

Absent from the asymptotic approach of \cite{gray2006toeplitz} or the myriad approaches of \cite{cheverud2001}, \cite{LiJi2005}, and \cite{Galwey2009} is a consideration of the finite, non-edge cases of $\sm{\Sigma}$ and how it might be best approximated. This work derives $\m{C}_{\Sigma}$, the nearest circulant matrix to $\sm{\Sigma}$ in the weak, or Frobenius, norm. $\m{C}_{\Sigma}$ is symmetric circulant, and so has only real eigenvalues. Indeed, it is proven that a real circulant matrix has real eigenvalues if and only if it is a symmetric circulant. $\m{C}_{\Sigma_e}$ is additionally shown to be asymptotically equivalent to $\sm{\Sigma}_e$ by considering the limit of the remainder $\sm{\Sigma_e} - \m{C}_{\Sigma_e}$. The approximation of $\sm{C}_{\Sigma}$ and the classic approximation of \cite{gray2006toeplitz} are compared, with the nearest circulant provided a much better approximation in the cases of $\rho$ close to 1 and small $M$.

Section \ref{c:multipleTesting:circDecom} presents an introduction to circulant matrices and the proof that the only real circulants with all real eigenvalues are symmetric. Section \ref{c:multipleTesting:nearestCirc} derives $\m{C}_{\Sigma}$ and $\m{C}_{\Sigma}$ and states their eigensystems. The asymptotic equivalence of $\sm{\Sigma}_e$ and $\m{C}_{\Sigma}$ are then present in Section \ref{c:multipleTesting:asympEquiv} before a comparison of the approximations is present in Section \ref{c:multipleTesting:rateConverge}.

\section{Circulant matrices} \label{c:multipleTesting:circDecom}

For clarity working with different matrices, let the function $\lambda_k(\m{C})$ return the $k^{\text{th}}$ eigenvalue of $\m{C}$, which may not be ordered by magnitude.

A complex matrix $\m{C} \in \Complex^{M \times M}$ is called circulant if the $i^{\text{th}}$ row is given by the cyclic shift $i$ elements rightward of a vector of $M$ elements, typically denoted $(c_0, c_1, c_2, \dots, c_{M-1})$. Explicitly
\begin{equation} \label{eq:explicitCirculant}
  \m{C} = \begin{bmatrix}
    c_0 & c_1 & c_2 & \dots & c_{M-2} & c_{M-1} \\
    c_{M-1} & c_0 & c_1 & \dots & c_{M-3} & c_{M-2} \\
    c_{M-2} & c_{M-1} & c_0 & \dots & c_{M-4} & c_{M-3} \\
    \vdots & \vdots & \vdots & \ddots & \vdots & \vdots \\
    c_2 & c_3 & c_4 & \dots & c_0 & c_1 \\
    c_1 & c_2 & c_3 & \dots & c_{M-1} & c_0 \\
    \end{bmatrix}
\end{equation}
So every circulant matrix $\m{C}$ can be specified by its first row alone. Moreover, this first row corresponds to the coefficients in a convenient expression of $\m{C}$ as a matrix polynomial. Let $\m{P}$ be the circulant matrix with $c_0 = c_2 = c_3 = \dots = c_{M-1} = 0$ and $c_1 = 1$. That is,
\begin{equation} \label{eq:pDef}
  \m{P} = [ \ve{e}_M | \ve{e}_1 | \dots | \ve{e}_{M-1} ]
\end{equation}
where $\ve{e}_i$ is the $i^{\text{th}}$ basis vector. Then $\m{P}$ is the permutation matrix corresponding to a cyclic shift of all elements of a vector $\ve{x} \in \Complex^M$ one to the right. Due to this cyclic shift property, it is also straightforward to note that
\begin{equation} \label{eq:powerPDef}
  \m{P}^m = \m{P} \m{P} \dots \m{P} = [ \ve{e}_{M-m+1} | \ve{e}_{M-m+2} | \dots | \ve{e}_M | \ve{e}_1 | \dots | \ve{e}_{M-m} ].
\end{equation}
Using these $\m{P}^m$, $\m{C}$ can be written
\begin{equation} \label{eq:circMatPol}
  \m{C} = c_0 \m{I}_M + c_1 \m{P} + c_2 \m{P}^2 + \dots + c_{M-1} \m{P}^{M-1},
\end{equation}
from which the eigensystem of $\m{C}$ can be derived from the eigensystem of $\m{P}$. Using a cofactor expansion of $\det (\m{P} - \lambda \m{I})$ it can be shown that the eigenvalues of $\m{P}$ are the $M^{\text{th}}$ roots of unity, that is
$$\lambda_k (\m{P}) = \left ( e^{\frac{2 \pi i}{M}} \right )^k = \omega^k$$
where $k = 0, \dots, M-1$. The corresponding eigenvectors $\ve{x}_k$ are then
\begin{equation} \label{eq:multipleTesting:circEigenVec}
  \ve{x}_k = \begin{bmatrix}
  1 \\
  \omega^k \\
  \omega^{2k} \\
  \vdots \\
  \omega^{(M-1)k}
\end{bmatrix},
\end{equation}
which can be seen by considering $\m{P} \ve{x}_k = \lambda_k(\m{P}) \ve{x}_k$. Note that any eigenvector of $\m{P}$ with eigenvalue $\lambda$ is also an eigenvector of $\m{P}^m$ with eigenvalue $\lambda^m$, and so the eigenvalues of $\m{C}$ are given by
\begin{equation} \label{eq:multipleTesting:circEigenVals}
  \lambda_k (\m{C}) = c_0 + \sum_{m = 1}^{M-1} c_m \omega^{mk}
\end{equation}
with corresponding eigenvectors $\ve{x}_k$ as above for $k = 0, \dots, M-1$. A particular circulant matrix structure is of interest.

\begin{definition}[Symmetric circulant] \label{def:symmCirc}
  A circulant matrix $\m{C} \in \Complex^{M \times M}$ is symmetric if elements in its first row $c_0, c_1, c_2, \dots, c_{M-1}$ satisfy $c_m = c_{M-m}$ for all $m \geq 1$.
\end{definition}

To provide a visual example of such a matrix, consider the circulant with $c_m = \min \{m, M-m\}$. When $M = 6$ we have
$$\begin{bmatrix}
  0 & 1 & 2 & 3 & 2 & 1 \\
  1 & 0 & 1 & 2 & 3 & 2 \\
  2 & 1 & 0 & 1 & 2 & 3 \\
  3 & 2 & 1 & 0 & 1 & 2 \\
  2 & 3 & 2 & 1 & 0 & 1 \\
  1 & 2 & 3 & 2 & 1 & 0
\end{bmatrix}$$
This circulant is symmetric, and any symmetric circulant matrix will have an analogous structure. By basic results of linear algebra, it follows that any symmetric circulant matrix will have only real eigenvalues. Indeed, a circulant matrix will have only real eigenvalues for any $M$ if and only if it is symmetric.

\begin{theorem}[Real eigenvalues of symmetric circulants] \label{thm:symmEigen}
  A circulant matrix $\m{C} \in \Reals^{M \times M}$ has real eigenvalues if and only if it is a symmetric circulant matrix.
\end{theorem}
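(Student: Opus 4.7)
The plan is to use the explicit formula for circulant eigenvalues established in equation \eqref{eq:multipleTesting:circEigenVals}, namely $\lambda_k(\m{C}) = c_0 + \sum_{m=1}^{M-1} c_m \omega^{mk}$ with $\omega = e^{2\pi i/M}$, and exploit the conjugation identity $\overline{\omega^{mk}} = \omega^{-mk} = \omega^{(M-m)k}$ that arises because $\omega^M = 1$.

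For the forward implication, I would assume $c_m = c_{M-m}$ for $m \geq 1$ and pair the terms in $\lambda_k(\m{C})$ indexed by $m$ and $M-m$. Each such pair contributes
\[
c_m \omega^{mk} + c_{M-m} \omega^{(M-m)k} = c_m\bigl(\omega^{mk} + \overline{\omega^{mk}}\bigr) = 2 c_m \cos\!\tfrac{2\pi m k}{M},
\]
which is real. When $M$ is even, the self-paired middle index $m = M/2$ has to be treated separately; there $\omega^{(M/2)k} = (-1)^k$ is already real, so $c_{M/2}(-1)^k$ contributes a real term. Adding in the real constant $c_0$ then shows every $\lambda_k(\m{C}) \in \Reals$.

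For the converse, I would argue by contradiction using the uniqueness of the discrete Fourier representation. Suppose every $\lambda_k(\m{C})$ is real, so $\lambda_k(\m{C}) = \overline{\lambda_k(\m{C})}$. Substituting the eigenvalue formula and using $\overline{\omega^{mk}} = \omega^{(M-m)k}$, then re-indexing $j = M-m$, yields
\[
\sum_{m=1}^{M-1} (c_m - c_{M-m})\, \omega^{mk} = 0 \qquad \text{for every } k = 0,1,\dots,M-1.
\]
Setting $d_0 = 0$ and $d_m = c_m - c_{M-m}$ for $m \geq 1$, these $M$ equations say that the discrete Fourier transform of the vector $(d_0,d_1,\dots,d_{M-1})$ vanishes identically. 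Since the DFT matrix $[\omega^{mk}]_{m,k=0}^{M-1}$ is nonsingular (its columns are precisely the eigenvectors $\ve{x}_k$ of $\m{P}$, which are linearly independent because the $\omega^k$ are distinct), it follows that $d_m = 0$ for every $m$, i.e.\ $c_m = c_{M-m}$ for all $m \geq 1$, so $\m{C}$ is a symmetric circulant.

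The only step requiring any care is the converse: one must be confident that the $M$ equations from $\lambda_k = \overline{\lambda_k}$, one for each $k$, actually force the antisymmetric part of $(c_1,\dots,c_{M-1})$ to vanish. This is precisely the invertibility of the DFT matrix, which is already implicit in the paper's discussion of the eigenvectors of $\m{P}$, so no extra machinery is needed. The forward direction and the even-$M$ bookkeeping are routine.
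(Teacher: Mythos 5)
Your proposal is correct and follows essentially the same route as the paper: both start from the eigenvalue formula $\lambda_k(\m{C}) = c_0 + \sum_{m=1}^{M-1} c_m \omega^{mk}$, use the identity $\overline{\omega^{mk}} = \omega^{(M-m)k}$ to reduce realness of all eigenvalues to the condition $\sum_{m=1}^{M-1}(c_m - c_{M-m})\,\omega^{mk} = 0$ for every $k$, and then conclude $c_m = c_{M-m}$ from the linear independence of the DFT vectors. Your finish is in fact slightly cleaner than the paper's: by padding with $d_0 = 0$ and invoking nonsingularity of the full $M \times M$ DFT matrix you get $d_m = 0$ immediately, whereas the paper characterizes the null space of the truncated $M \times (M-1)$ matrix as the constant line (the columns are actually linearly independent, so the kernel is trivial) and then needs a separate parity argument at $m = M/2$ or $m = \lfloor M/2 \rfloor$ to force the constant to vanish.
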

\begin{proof}
  Consider the eigensystem of $\m{C}$. As it is circulant, it has eigenvalues
  $$\lambda_k (\m{C}) = c_0 + \sum_{m = 1}^{M-1} c_m \omega^{mk},$$
  or rather
  $$\lambda_k (\m{C}) = c_0 + \sum_{m = 1}^{M-1} c_m \left ( \cos \frac{2\pi mk}{M} + i \sin \frac{2\pi mk}{M} \right )$$
  We can rewrite this to emphasize the real and imaginary components as
  $$\lambda_k (\m{C}) = \left ( \sum_{m = 0}^{M-1} c_m \cos \frac{2\pi mk}{M} \right ) + i \left ( \sum_{m = 1}^{M-1} c_m \sin \frac{2\pi mk}{M} \right ).$$
  If $\lambda_k (\m{C}) \in \Reals$ for all $k \in \{0, 1, \dots, M-1\}$, we must have
  $$\sum_{m = 1}^{M-1} c_m i \sin \frac{2\pi mk}{M} = 0 \hspace{0.5cm} \forall k \in \{0, 1, \dots, M-1\}.$$
  But note that
  $$i \sin \frac{2\pi mk}{M} = \frac{1}{2} \left ( e^{\frac{2\pi mk}{M} i} - e^{-\frac{2\pi mk}{M} i} \right )$$
  and
  $$e^{-\frac{2\pi mk}{M} i} = e^{-\frac{2\pi mk}{M} i + 2\pi ki} = e^{\frac{2\pi (M - m)k}{M} i},$$
  and so we require
  $$\sum_{m = 1}^{M-1} c_m \frac{1}{2} \left ( e^{\frac{2\pi mk}{M}i} - e^{\frac{2\pi (M-m)k}{M}i} \right ) = 0 \hspace{0.5cm} \forall k \in \{0, 1, 2, \dots, M-1\}.$$
  However,
  $$\sum_{m = 1}^{M-1} c_m \frac{1}{2} \left ( e^{\frac{2\pi mk}{M}i} - e^{\frac{2\pi (M-m)k}{M}i} \right ) = 0$$
  \\
  $$\iff \sum_{m = 1}^{M-1} c_m e^{\frac{2\pi mk}{M}i} - \sum_{m = 1}^{M-1} c_m e^{\frac{2\pi (M-m)k}{M}i} = 0$$
  \\
  $$\iff \sum_{m = 1}^{M-1} c_m e^{\frac{2\pi mk}{M}i} - \sum_{m = 1}^{M-1} c_{M-m} e^{\frac{2\pi mk}{M}i} = 0$$
  \\
  $$\iff \sum_{m = 1}^{M-1} ( c_m - c_{M-m} ) e^{\frac{2\pi mk}{M}i} = \sum_{m = 1}^{M-1} ( c_m - c_{M-m} ) \omega^{mk} = 0$$
  for all $k \in \{0, 1, \dots, M-1\}$. In other words, the eigenvalues of $\m{C}$ are all real if and only if the vector of differences $(c_m - c_{M-m})_{m=1,\dots,M-1}$ is a vector in the null space of
  $$\begin{bmatrix}
  1 & 1 & 1 & \dots & 1 \\
  \omega & \omega^2 & \omega^3 & \dots & \omega^{M-1} \\
  \omega^2 & \omega^4 & \omega^6 & \dots & \omega^{2(M-1)} \\
  \omega^3 & \omega^6 & \omega^9 & \dots & \omega^{3(M-1)} \\
  \vdots & \vdots & \vdots & \ddots & \vdots \\
  \omega^{M-1} & \omega^{2(M-1)} & \omega^{3(M-1)} & \dots & \omega^{(M-1)^2} \\
  \end{bmatrix}.$$
  Noting that these columns are eigenvectors of $\m{C}$, it can quickly be recognized that the null space of this matrix is the line $x_1 = x_2 = \dots = x_M$, as this is the final orthogonal eigenvector of $\m{C}$. Therefore, our differences must satisfy
  $$c_m - c_{M-m} = a$$
  for some $a \in \Complex$ for all $m \in \{1, 2, \dots, M-1\}$. If $M$ is even, then $M/2 \in \{1, 2, \dots, M-1\}$, and so when $m = M/2$ we get the difference $c_{M/2} - c_{M - M/2} = c_{M/2} - c_{M/2} = 0$. Hence, $a = 0$ is the only solution. If $M$ is odd, then we have in particular $c_{\lfloor M/2 \rfloor} - c_{M - \lfloor M/2 \rfloor} = c_{\lfloor M/2 \rfloor} - c_{\lfloor M/2 \rfloor + 1} = a = c_{\lfloor M/2 \rfloor + 1} - c_{\lfloor M/2 \rfloor} = c_{\lfloor M/2 \rfloor + 1} - c_{M - \lfloor M/2 \rfloor - 1}$, which is true only if $c_{\lfloor M/2 \rfloor + 1} = c_{\lfloor M/2 \rfloor}$ and $a = 0$. Therefore, real eigenvalues are ensured if and only if
  $$c_m - c_{M-m} = 0 \iff c_m = c_{M-m},$$
  the definition of a symmetric circulant.
\end{proof}

\section{The nearest circulant to $\m{\Sigma}$} \label{c:multipleTesting:nearestCirc}

Now consider the decomposition
\begin{equation} \label{eq:circDecomp}
  \sm{\Sigma} = \m{C} + \m{R}
\end{equation}
where $\m{C}$ is a circulant matrix and $\m{R} = \sm{\Sigma} - \m{C}$ is a matrix of the element-wise residuals between $\m{C}$ and $\sm{\Sigma}$. This reframing moves the discussion from the space of asymptotic results to the features of $\m{C}$ and $\m{R}$, a familiar framework for anyone accustomed to considering the residuals of a given approximation.

Of immediate and obvious interest is the closest circulant matrix to $\sm{\Sigma}$, call it $\m{C}_{\Sigma}$. Consider using the weak, or Frobenius, norm on matrices. First, introduce the \textit{vectorization} operator on a matrix $\m{A} \in \Complex^{M \times N}$, denoted $\mvec{\m{A}}$. This operator takes $\m{A} \in \Complex^{M \times N}$ and converts it to a vector in $\Complex^{MN}$ by appending columns in order. So, for example,
$$\mvec{\begin{bmatrix}
    1 & 2  \\
    4 & 5  \\
  \end{bmatrix}} =
\begin{bmatrix}
  1 \\ 4 \\ 2 \\ 5
  \end{bmatrix}.$$
The Frobenius norm is then given by
$$\frob{\m{A}} = \sqrt{\conj{(\mvec{\m{A}})} (\mvec{\m{A}})}$$
or equivalently
$$\frob{\m{A}} = \sqrt{\trace \left ( \conj{\m{A}} \m{A} \right )}$$
for a matrix $\m{A} \in \Complex^{M \times M}$ with complex conjugate $\conj{\m{A}}$. In the real case, this is
$$\frob{\m{A}} = \sqrt{\trace \left ( \tr{\m{A}} \m{A} \right )}$$
Let $\m{C}$ be an $M \times M$ circulant matrix with first row $( c_0, c_1, c_2, \dots, c_{M-1} ) \in \Reals^M$. Then by definition $\m{C}_{\Sigma}$ is given by $\argmin_{\m{C}} \frob{\sm{\Sigma} - \m{C}}$, or equivalently $\argmin_{\m{C}} \frob{\sm{\Sigma} - \m{C}}^2$. Taking the second of these, we have
\begin{equation} \label{eq:argMinDef}
  \m{C}_{\Sigma} = \argmin_{\m{C}} \frob{\sm{\Sigma} - \m{C}}^2.
\end{equation}
Considering that
\begin{equation*}
  \begin{split}
    \frob{\sm{\Sigma} - \m{C}}^2 & = \trace \left ( \tr{\left [ \sm{\Sigma} - \m{C} \right ]} \left [ \sm{\Sigma} - \m{C} \right ] \right ) \\
    & \\
    & = \left ( \trace \tr{\sm{\Sigma}} \sm{\Sigma} - \trace \tr{\sm{\Sigma}} \m{C} - \trace \tr{\m{C}} \sm{\Sigma} + \trace \tr{\m{C}} \m{C} \right )
  \end{split}
\end{equation*}
and $\trace \tr{\sm{\Sigma}} \sm{\Sigma}$ is constant in $\m{C}$, we need only consider minimizing
\begin{equation} \label{eq:argMinState}
  F(\m{C}) = \trace \tr{\m{C}} \m{C} - \trace \tr{\sm{\Sigma}} \m{C} - \trace \tr{\m{C}} \sm{\Sigma}.
\end{equation}
The first term of Equation (\ref{eq:argMinState}) is straightforward to express in terms of the $c_m$. As $\m{C}$ is circulant,
$$\trace \tr{\m{C}} \m{C} = M \sum_{m = 0}^{M-1} c_m^2.$$
The other terms can be evaluated by expressing $\m{C}$ as a matrix polynomial.

Equation (\ref{eq:circMatPol}) and the symmetry of $\sm{\Sigma}$ allow us to write
$$\tr{\sm{\Sigma}} \m{C} = \sm{\Sigma} \left ( c_0 \m{I} + \sum_{m = 1}^{M-1} c_m \m{P}^m \right ) = c_0 \sm{\Sigma} + \sum_{m = 1}^{M-1} c_m \sm{\Sigma} \m{P}^m$$
and similarly
$$\tr{\m{C}} \sm{\Sigma} = c_0 \sm{\Sigma} + \sum_{m = 1}^{M-1} c_m \tr{\left ( \m{P}^m \right )} \sm{\Sigma} =  c_0 \sm{\Sigma} + \sum_{m = 1}^{M-1} c_m \tr{\left (\sm{\Sigma} \m{P}^m \right )}.$$
Next, consider
\begin{equation*}
    \trace \sm{\Sigma} \m{P}^m  = \trace \left ( \sm{\Sigma} [\ve{e}_{M-m+1} | \ve{e}_{M-m+2} | \dots | \ve{e}_{M-m} ] \right ),
\end{equation*}
which can be evaluated by considering the $k^{\text{th}}$ row of $\sm{\Sigma}$, $\sv{\sigma}_k$. The first $k$ elements of this row are the descending sequence $\rho_{k-1}, \rho_{k-2}, \dots, \rho_0$, and the remaining elements are the ascending sequence $\rho_1, \rho_2, \dots, \rho_{M-k}$. Noting that the trace of a product of two matrices is simply a sum of the inner products of the rows of the first with the columns of the second, we obtain
\begin{equation} \label{eq:productTrace}
  \begin{split}
    \trace \sm{\Sigma} \m{P}^m & = \sum_{k = 1}^{m} \tr{\sv{\sigma}}_k \ve{e}_{M-m+k} + \sum_{k = 1}^{M - m} \tr{\sv{\sigma}}_{m+k} \ve{e}_{k} \\
    & \\
    & = \sum_{k=1}^m \rho_{M-m} + \sum_{k=1}^{M-m} \rho_m \\
    & \\
    & = m \rho_{M-m} + (M-m) \rho_m.
  \end{split}
\end{equation}

Equation (\ref{eq:productTrace}) can then be substituted into Equation (\ref{eq:argMinState}) using the decomposition of Equation (\ref{eq:circMatPol}) to give
\begin{equation} \label{eq:finalDiff}
  F(\m{C}) = M \sum_{m=0}^{M-1} c_m^2 - 2 \left ( M c_0 \rho_0 + \sum_{m=1}^{M-1} c_m (m \rho_{M-m} + (M - m) \rho_m) \right ).
\end{equation}
As $\argmin_{\m{C}} F(\m{C})$ is the same as $\argmin_{\m{C}} \frob{\sm{\Sigma} - \m{C}}$, we can now consider the values which minimize Equation (\ref{eq:finalDiff}) in order to find the nearest circulant matrix to $\sm{\Sigma}$. Taking
\begin{equation*} 
  \frac{\partial}{\partial c_m} F(\m{C}) = \begin{cases}
    2Mc_0 - 2M\rho_0 & \text{for } m = 0, \\
    & \\
    2Mc_m - 2 \left ( m \rho_{M-m} + (M - m) \rho_m \right ) & \text{otherwise},
  \end{cases}
\end{equation*}
and noting that the Hessian matrix is $2M\m{I}$ and hence is positive definite so any solutions to $\argmin F(\m{C})$ must be minima, we obtain
\begin{equation*}
  c_m = \begin{cases}
    \rho_0 & \text{for } m = 0, \\
    & \\
    \frac{m}{M} \rho_{M-m} + \frac{M-m}{M} \rho_m & \text{otherwise},
  \end{cases}
\end{equation*}
which can be re-expressed as
\begin{equation} \label{eq:closestCirc}
  c_m = \begin{cases}
    \rho_0 & \text{for } m = 0, \\
    & \\
    \rho_m + \frac{m}{M}(\rho_{M-m} - \rho_m) & \text{otherwise},
  \end{cases}
\end{equation}
to make the relationship between $c_m$ and $\rho_m$ clearer. An important consequence of this system of equations is that $c_m = \frac{m}{M} \rho_{M-m} + \frac{M-m}{M} \rho_m = \frac{M - (M - m)}{M} \rho_{M-m} + \frac{M - m}{M} \rho_{M - (M - m)} = c_{M-m}$, and so $\m{C}_{\Sigma}$ is a symmetric circulant matrix with $c_m$ defined as in Equation (\ref{eq:closestCirc}). Therefore, this nearest circulant will have only real eigenvalues. This is desirable for the multiple testing adjustment case of \cite{cheverud2001}, \cite{LiJi2005}, and \cite{Galwey2009}, where the effective number of tests is a real number that is some function of the eigenvalues.

So the optimal decomposition in the Frobenius norm is
\begin{equation} \label{eq:sigmaCirculantDecomp}
  \sm{\Sigma} = \m{C}_{\Sigma} + \m{R}_{\Sigma}
\end{equation}
where $\m{C}_{\Sigma}$ is the circulant matrix defined by Equation (\ref{eq:closestCirc}), that is
\begin{equation*}
  c_m = \begin{cases}
    \rho_0 & \text{for } m = 0, \\
    & \\
    \rho_m + \frac{m}{M}(\rho_{M-m} - \rho_m) & \text{otherwise},
  \end{cases}
\end{equation*}
and $\m{R}_{\Sigma} = \sm{\Sigma} - \m{C}_{\Sigma}$, and so the value in the $m^{\text{th}}$ off-diagonal of $\m{R}_{\Sigma}$ is $\rho_m - c_m = \rho_m - (\rho_m + \frac{m}{M}(\rho_{M-m} - \rho_m) = \frac{m}{M} (\rho_m - \rho_{M-m}).$

The eigenvalues of $\m{C}_{\Sigma}$ are given by a substitution of Equation (\ref{eq:closestCirc}) into Equation (\ref{eq:multipleTesting:circEigenVals}), giving
\begin{equation} \label{eq:multipleTesting:circApproxEig}
  \lambda_k (\m{C}_{\Sigma}) = \rho_0 + 2 \sum_{m = 1}^{M-1} \frac{M - m}{M} \rho_m \cos \frac{2 \pi mk}{M}.
\end{equation}
The corresponding eigenvectors are given by Equation (\ref{eq:multipleTesting:circEigenVec}).

Consider $\m{R}_{\Sigma}$ briefly. For large $M$ and small $m$, $\frac{m}{M} (\rho_{M-m} - \rho_m) \approx 0$ while when $m$ is close to $M$, $\frac{m}{M} (\rho_m - \rho_{M-m}) \approx \rho_m - \rho_{M-m}$. This implies that in the case of large $M$, $\m{R}_{\Sigma}$ will have vanishingly small values for the central off-diagonals, and values in the corners of approximately $\rho_m - \rho_{M-m}$.

In the particular case of $\sm{\Sigma}_e$ from Equation (\ref{eq:multipleTesting:specEigCov}), $\rho_m = \rho^m$. In this case, $\m{C}_{\Sigma_e}$ has entries
\begin{equation*}
  c_m =  \rho^m + \frac{m}{M}(\rho^{M-m} - \rho^m)
\end{equation*}
and $\m{R}_{\Sigma_e}$ is $\frac{m}{M} (\rho^m - \rho^{M-m})$ for the $m^{\text{th}}$ off-diagonal where $m \in \{0, 1, 2, \dots, M-1\}$. The eigenvalues of $\m{C}_{\Sigma_e}$ are therefore
\begin{equation*}
  \lambda_k (\m{C}_{\Sigma_e}) = 1 + 2 \sum_{m = 1}^{M-1} \frac{M - m}{M} \rho^m \cos \frac{2 \pi mk}{M}.
\end{equation*}

\section{Asymptotic equivalence of $\m{C}_{\Sigma_e}$ and $\m{\Sigma}_e$} \label{c:multipleTesting:asympEquiv}

The approximation matrix $\m{C}_{\Sigma_e}$ has been derived here based purely on minimizing $\frob{\sm{\Sigma}_e - \m{C}_{\Sigma_e}}$ without any of the asymptotic guarantees of \cite{grenanderszego1958}. \cite{gray2006toeplitz} derives a similar result by considering the asymptotic equivalence of matrices in the weak norm. Matrices $\m{A}$ and $\m{B}$ in $\Complex^{M \times M}$ are said to be asymptotically equivalent in the weak norm if
$$\lim_{M \rightarrow \infty} \frac{1}{\sqrt{M}} \frob{A - B} = 0.$$
Therefore, a natural consideration is the difference
\begin{equation} \label{eq:multipleTesting:asympEq}
  \lim_{M \rightarrow \infty} \frac{1}{\sqrt{M}} \frob{\sm{\Sigma}_e - \m{C}_{\Sigma_e}} = \lim_{M \rightarrow \infty} \frac{1}{\sqrt{M}} \frob{\m{R}_{\Sigma_e}}.
\end{equation}
Before taking the limit, note
$$\frac{1}{\sqrt{M}} \frob{\m{R}_{\Sigma_e}} =  \sqrt{ \frac{1}{M} \trace{\tr{\m{R}_{\Sigma_e}} \m{R}_{\Sigma_e}} },$$
which has the square
\begin{eqnarray}
    \frac{1}{M} \frob{\m{R}_{\Sigma_e}}^2 & = &  \frac{1}{M} \trace{\tr{\m{R}_{\Sigma_e}} \m{R}_{\Sigma_e}} \nonumber \\
    & & \nonumber \\
    & = & \frac{1}{M} \sum_{i = 0}^{M-1} \sum_{j = 0}^{M-1} \frac{\abs{i-j}^2}{M^2} \left ( \rho^{2\abs{i-j}} - 2 \rho^{\abs{i-j} + M - \abs{i-j}} + \rho^{2(M - \abs{i-j})} \right ) \nonumber \\
    & & \nonumber \\
    & = & \frac{2}{M^3} \sum_{m = 1}^{M-1} (M-m) m^2 \left ( \rho^{2m} - 2 \rho^{M} + \rho^{2(M - m)} \right ) \label{eq:asymTrace}
\end{eqnarray}
Evaluating this expression for $\rho < 1$ is made easier by considering the general sum
$$\sum_{m = 1}^{M} m^k \rho^m.$$
\begin{theorem}[Truncated geometric power series] \label{thm:trunkmoment}
  The finite sum
  $$\sum_{n = 1}^{N} n^k p^n$$
  with $\abs{p} < 1$ can be expressed as
  $$\frac{p}{1 - p} \left [ (1 - p^N) G^{(k)}(0) - p^{N} \sum_{l = 1}^k {k \choose l} N^l G^{(k-l)}(0) \right ]$$
  where
  $$G^{(k)}(0) = \frac{d^k}{dt^k} E[e^{tX}] \Big |_{t = 0}$$
  is the $k^{\text{th}}$ derivative of $G(t) = E[e^{tX}]$ evaluated at zero for $X \sim Geo(1 - p)$.
\end{theorem}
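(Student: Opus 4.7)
The plan is to prove the identity by expressing the truncated sum as the difference of the full power series and its tail, then using the moment-generating function interpretation to put each piece into the required form. Throughout, I will use the fact that for $X \sim Geo(1-p)$ with $P(X = n) = (1-p)p^{n-1}$ for $n \geq 1$, the $k^{\text{th}}$ moment is
$$G^{(k)}(0) = E[X^k] = (1-p)\sum_{n = 1}^{\infty} n^k p^{n-1} = \frac{1-p}{p} \sum_{n=1}^{\infty} n^k p^n,$$
so that
$$\sum_{n = 1}^{\infty} n^k p^n = \frac{p}{1-p} G^{(k)}(0).$$
This handles the infinite sum and is valid because $|p| < 1$ guarantees convergence of the series and of the MGF in a neighbourhood of zero.

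Next, I would write
$$\sum_{n = 1}^{N} n^k p^n = \sum_{n = 1}^{\infty} n^k p^n - \sum_{n = N+1}^{\infty} n^k p^n$$
and handle the tail by the substitution $n = N + j$ for $j \geq 1$, giving
$$\sum_{n = N+1}^{\infty} n^k p^n = p^N \sum_{j = 1}^{\infty} (N + j)^k p^j.$$
The binomial theorem then yields
$$(N + j)^k = \sum_{l = 0}^{k} \binom{k}{l} N^l j^{k - l},$$
so that, after exchanging the finite and infinite sums (permissible by absolute convergence), each inner sum $\sum_{j = 1}^{\infty} j^{k - l} p^j$ can be replaced by $\frac{p}{1 - p} G^{(k - l)}(0)$ via the identity from the first paragraph. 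This expresses the tail as
$$\sum_{n = N+1}^{\infty} n^k p^n = \frac{p^{N+1}}{1 - p} \sum_{l = 0}^{k} \binom{k}{l} N^l G^{(k - l)}(0).$$

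Finally, substituting back and isolating the $l = 0$ term (which contributes $G^{(k)}(0)$) produces
$$\sum_{n = 1}^{N} n^k p^n = \frac{p}{1 - p}\left[ G^{(k)}(0) - p^N G^{(k)}(0) - p^N \sum_{l = 1}^{k} \binom{k}{l} N^l G^{(k-l)}(0) \right],$$
which is exactly the claimed form after factoring $(1 - p^N) G^{(k)}(0)$. There is no real obstacle here; the only subtlety is making sure the index shift and the binomial expansion are carried out cleanly, and that the $l = 0$ piece is pulled out so the leading $(1 - p^N) G^{(k)}(0)$ factor appears explicitly. The convention $G^{(0)}(0) = E[X^0] = 1$ is used implicitly when the $l = k$ term reduces to $N^k$.
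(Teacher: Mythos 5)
Your proposal is correct and follows essentially the same route as the paper: split the truncated sum into the full series minus its tail, re-index the tail by $n = N + j$, expand $(N+j)^k$ with the binomial theorem, and identify the resulting infinite sums with the geometric moments $G^{(k-l)}(0)$. The only difference is cosmetic — the paper factors out $\frac{p}{1-p}$ at the outset so that each series is literally a moment $\sum_n n^k p^{n-1}(1-p)$, whereas you carry the raw sums and apply the conversion $\sum_{n=1}^{\infty} n^k p^n = \frac{p}{1-p}G^{(k)}(0)$ term by term, which yields the identical algebra.
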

\begin{proof}
Multiplying by $\frac{1 - p}{1 - p}$, we obtain
$$\sum_{n = 1}^{N} n^k p^n = \frac{p}{1 - p} \sum_{n = 1}^{N} n^k p^{n-1} (1 - p).$$
Now
\begin{eqnarray}
    \sum_{n = 1}^{N} n^k p^{n-1} (1 - p) & = & \sum_{n = 1}^{\infty} n^k p^{n-1} (1 - p) - p^{N} \sum_{n = N + 1}^{\infty} n^k p^{n - N - 1} (1 - p) \nonumber \\
    & & \nonumber \\
    & = & \sum_{n = 1}^{\infty} n^k p^{n-1} (1 - p) - p^{N} \sum_{n = 1}^{\infty} (n + N)^k p^{n - 1} (1 - p) \nonumber \\
    & & \nonumber \\
    & = & \sum_{n = 1}^{\infty} n^k p^{n-1} (1 - p) - p^{N} \sum_{n = 1}^{\infty} \sum_{l = 0}^k {k \choose l} n^{k-l} N^l p^{n - 1} (1 - p) \nonumber \\
    & & \nonumber \\
    & = & (1 - p^N) \sum_{n = 1}^{\infty} n^k p^{n-1} (1 - p) \nonumber \\
    & & \hspace{0.5cm}- p^{N} \sum_{l = 1}^k {k \choose l} N^l \sum_{m = 1}^{\infty}  n^{k-l} p^{n - 1} (1 - p),  \label{eq:decomp1}
\end{eqnarray}
but
$$\sum_{n = 1}^{\infty} n^k p^{n-1} (1 - p)$$
is just the $k^{\text{th}}$ moment of a geometric distribution with a probability of sucess of $1 - p$. This distribution has a moment-generating function
\begin{equation} \label{eq:geoMGF}
G(t) = \frac{1 - p}{1 - e^t p}
\end{equation}
which can be used to evaluate the $k^{\text{th}}$ moment by taking the $k^{\text{th}}$ derivative and evaluating it at $t = 0$, denoted
\begin{equation*} 
G^{(k)}(0)
\end{equation*}
and substituted into Equation \ref{eq:decomp1} to give the more succinct
\begin{equation} \label{eq:sumByMoments}
  \sum_{n = 1}^{N} n^k p^{n-1} (1 - p) = (1 - p^N) G^{(k)}(0) - p^{N} \sum_{l = 1}^k {k \choose l} N^l G^{(k-l)}(0).
\end{equation}
This gives the result for the original sum
\begin{equation} \label{eq:kfirstmoments}
 \sum_{n = 1}^{N} n^k p^n = \frac{p}{1 - p} \left [ (1 - p^N) G^{(k)}(0) - p^{N} \sum_{l = 1}^k {k \choose l} N^l G^{(k-l)}(0) \right ].
\end{equation}
\end{proof}
This result provides a representation of the sum $\sum_{n = 1}^{N} n^k p^n$ in terms of the first $k$ moments of the geometric distribution with parameter $1 - p$. Additionally, taking the limit $N \rightarrow \infty$ reduces this expression to the $k^{\text{th}}$ moment of the geometric distribution with parameter $1 - p$ multiplied by $p/(1 - p)$. As the original sum is the $k^{\text{th}}$ geometric moment truncated at $N$ and scaled in the same way, this result is expected.

We can use this theorem by expanding Equation \ref{eq:asymTrace} to give
\begin{eqnarray}
  \frac{1}{M} \frob{\m{R}_{\Sigma_e}}^2 & = & \frac{2}{M^3} \sum_{m = 1}^{M-1} (M-m) m^2 \left ( \rho^{2m} - 2 \rho^{M} + \rho^{2(M - m)} \right ), \nonumber \\
  & & \nonumber \\
  & = & \frac{2}{M^2} \left [ \sum_{m = 1}^{M-1} (M^2 - 2Mm + 2m^2) \rho^{2m} - 2 \rho^M \sum_{m = 1}^{M-1} m^2 \right ] \nonumber \\
  & & - \frac{2}{M^3} \left [ \sum_{m = 1}^{M-1} (M^3 - 3M^2m + 3Mm^2) \rho^{2m} - 2 \rho^M \sum_{m = 1}^{M-1} m^3 \right ], \nonumber \\
  & & \nonumber \\
  & = & 2 \left [ \sum_{m = 1}^{M-1} \left ( \frac{m}{M} - \frac{m^2}{M^2} \right ) \rho^{2m} - 2 \rho^M \left ( \sum_{m = 1}^{M-1} \frac{m^2}{M^2} - \frac{m^3}{M^3} \right ) \right ] \label{eq:frobeniussimp}
\end{eqnarray}
which has terms proportional to Equation \ref{eq:kfirstmoments} for $k = 1$ and $2$. No further evaluation is necessary to see
$$ \lim_{M \rightarrow \infty} \frac{1}{M} \frob{\m{R}_{\Sigma_e}}^2 = 0,$$
as these first two moments are finite for the geometric distribution and the final term is strictly less than $4M \rho^M$, which is zero in the limit. As a result we also get
$$ \lim_{M \rightarrow \infty} \frac{1}{\sqrt{M}} \frob{\m{R}_{\Sigma_e}} = 0$$
and so $\m{C}_{\Sigma_e}$ is asymptotically equivalent to $\sm{\Sigma}_e$.

The case of $\m{C}_{\Sigma}$ and $\sm{\Sigma}$ is less straightforward, as Equation \ref{eq:asymTrace} becomes
\begin{equation} \label{eq:nonexponent}
  \frac{1}{M} \frob{\m{R}_{\Sigma}}^2 = \frac{2}{M^3} \sum_{m = 1}^{M-1} (M-m) m^2 \left ( \rho_m^2 - 2 \rho_{M-m} \rho_m + \rho_{M - m}^2 \right ).
\end{equation}
This expression will be zero in the limit $M \rightarrow \infty$ if the series $\rho_m$ is absolutely convergent and the partial sums $\sum_{m = 1}^{M-1}m^3 \rho_m$ and $\sum_{m = 1}^{M-1} m^2 \rho_m$ are $O(M^2)$ and $O(M)$, respectively. If these conditions are met, then $\m{C}_{\Sigma}$ and $\sm{\Sigma}$ are asymptotically equivalent.

\section{Comparing approximations} \label{c:multipleTesting:rateConverge}
  
$\m{R}_{\Sigma_e}$ can be evaluated further by considering Equation \ref{eq:kfirstmoments} for $k = 1$ and $2$ with $p = \rho^2$ and $N = M-1$:
\begin{eqnarray}
  \sum_{m = 1}^{M-1} m \rho^{2m} & = & \frac{\rho^2}{1 - \rho^2} \Bigg [ \frac{1 - \rho^{2(M-1)}}{1 - \rho^2} - \rho^{2(M-1)} (M - 1) \Bigg ], \label{eq:k1sum} \\
  & & \nonumber \\
  \sum_{m = 1}^{M-1} m^2 \rho^{2m} & = & \frac{\rho^2}{1 - \rho^2} \Bigg [ \frac{(1 - \rho^{2(M-1)})(1 + \rho^2)}{(1 - \rho^2)^2} - \frac{2 \rho^{2(M-1)}}{1 - \rho^2}(M - 1) \nonumber \\
  & & \hspace{1.8cm} - \rho^{2(M-1)} (M-1)^2 \Bigg ]. \label{eq:k2sum}
\end{eqnarray}
Substituting Equations \ref{eq:k1sum} and \ref{eq:k2sum} into Equation \ref{eq:frobeniussimp} gives
\begin{eqnarray}
  \sum_{m = 1}^{M-1} \left ( \frac{m}{M} - \frac{m^2}{M^2} \right ) \rho^{2m} & = & \frac{\rho^2}{M(1 - \rho^2)} \Bigg [ \frac{1 + \rho^{2(M-1)}}{1 - \rho^2} - \frac{1 + \rho^2 + \rho^{2(M-1)} - 3\rho^{2M}}{(1 - \rho^2)^2}\frac{1}{M} \nonumber \\
  & & \hspace{2.2cm} - \rho^{2(M-1)} \left ( 1 - \frac{1}{M} \right )\Bigg ]. \label{eq:mm2rhosum}
\end{eqnarray}
Simplifying
$$2 \rho^M \left ( \sum_{m = 1}^{M-1} \frac{m^2}{M^2} - \frac{m^3}{M^3} \right )$$
comes by substituting the equations for the sums of squares and cubes of the first $M-1$ integers:
\begin{eqnarray}
  2 \rho^M \left ( \sum_{m = 1}^{M-1} \frac{m^2}{M^2} - \frac{m^3}{M^3} \right ) & = & 2 \rho^M \left ( \frac{(M-1)M(2M-1)}{6M^2} - \frac{((M-1)M)^2}{4M^3} \right ) \nonumber \\
  & & \nonumber \\
  & = & 2 \rho^M  \left ( \frac{2M^3 - 3M^2 + M}{6M^2} - \frac{M^4 - 2M^3 + M^2}{4M^3} \right ) \nonumber \\
  & & \nonumber \\
  & = & \frac{1}{6} \rho^M  \left ( M - \frac{1}{M} \right ) \label{eq:diffSqNCube}
\end{eqnarray}
Taking the difference of Equations \ref{eq:mm2rhosum} and \ref{eq:diffSqNCube} gives an expression for the squared Frobenius norm of $\m{R}_{\Sigma_e}$:
\begin{eqnarray}
  \frac{1}{M} \frob{\m{R}_{\Sigma_e}}^2 & = &  \frac{2\rho^2}{M(1 - \rho^2)} \Bigg [ \frac{1 + \rho^{2(M-1)}}{1 - \rho^2} - \frac{1 + \rho^2 + \rho^{2(M-1)} - 3\rho^{2M}}{M(1 - \rho^2)^2} - \rho^{2(M-1)}  \frac{M - 1}{M}\Bigg ] \nonumber \\
  & & - \frac{1}{3} \rho^M \left ( M - \frac{1}{M} \right ) \label{eq:frobNormSigE}
\end{eqnarray}

\cite{gray2006toeplitz} suggests a slightly different approximation. Following \cite{grenanderszego1958}, the sum
$$f(x) = \sum_{k = -\infty}^{\infty} \rho_k e^{ikx} = \sum_{k = -\infty}^{\infty} \rho^{\abs{k}} e^{ikx}$$
is known to be critical to the approximation of $\sm{\Sigma}$. \cite{gray2006toeplitz} considers circulant entries generated using the expression
\begin{equation} \label{eq:multipleTesting:grayEq}
  c_m = \begin{cases}
    \rho_0 & \text{for } m = 0, \\
    & \\
    \frac{1}{M} \sum_{j = 0}^{M-1} f\left ( \frac{2\pi j}{M} \right ) e^{\frac{2 \pi i j m}{M}} & \text{otherwise}, \\
  \end{cases}
\end{equation}
the second case can be expressed
\begin{equation*}
  \begin{aligned}
    \frac{1}{M} \sum_{j = 0}^{M-1} f\left ( \frac{2\pi j}{M} \right ) e^{\frac{2 \pi i j m}{M}} & = \frac{1}{M} \sum_{j = 0}^{M-1} \sum_{k = -\infty}^{\infty} \rho^{\abs{k}} e^{ik \frac{2\pi j}{M}} e^{\frac{2 \pi i j m}{M}} \\
    & \\
    & = \sum_{k = -\infty}^{\infty} \rho^{\abs{k}} \frac{1}{M} \sum_{j = 0}^{M-1} e^{\frac{2 \pi i j}{M}(m+k)} \\
    & \\
    & = \sum_{k = -\infty}^{\infty} \rho^{\abs{k}} \ind{0}{(m+k)\text{mod} M} \\
  \end{aligned}
\end{equation*}
as the second sum is the sum of the squared $M^{\text{th}}$ roots of unity, which are orthonormal. Now
\begin{equation*}
  \begin{aligned}
    \sum_{k = -\infty}^{\infty} \rho^{\abs{k}} \ind{0}{(m+k)\text{mod} M} & = \sum_{k = -\infty}^{\infty} \rho^{\abs{-m + kM}} \\
    & \\
    & = \sum_{k = -\infty}^0 \rho^{m - kM} + \sum_{k = 1}^{\infty} \rho^{- m + kM} \\
    & \\
    & = \rho^m \frac{1}{1 - \rho^M} + \rho^{-m} \frac{\rho^M}{1 - \rho^M} \\
    & \\
    & = \frac{1}{1 - \rho^M} \big ( \rho^m + \rho^{M-m} \big ) \\
  \end{aligned}
\end{equation*}
when $\rho < 1$. Therefore, Equation \ref{eq:multipleTesting:grayEq} becomes
\begin{equation} \label{eq:multipleTesting:grayApprox}
  c_m = \begin{cases}
    \rho_0 & \text{for } m = 0, \\
    & \\
    \frac{1}{1 - \rho^M} \big ( \rho^m + \rho^{M-m} \big )  & \text{otherwise},\\
  \end{cases}
\end{equation}
in the particular case of $\rho_m = \rho^m$. So, while $\m{C}_{\Sigma_e}$ has entries which are a weighted average of $\rho^m$ and $\rho^{M-m}$, this approximation instead takes a sum scaled by $1 - \rho^M$. Define $\m{C}_{GS}$ as the circulant matrix with these entries, and let $\m{R}_{GS}$ be $\sm{\Sigma_e} - \m{C}_{GS}$. Then
\begin{equation} \label{eq:frobNormGren}
    \frac{1}{M} \frob{\m{R}_{GS}}^2 = \frac{2}{(1 - \rho^M)^2} \left [ \frac{\rho^2 (1 - \rho^{2M})^2}{(1 - \rho^2)^2} \frac{1}{M} + (M - 2) \rho^{2M} \right ].
\end{equation}
As $\rho^M = O(1/M^k)$ for $0 \leq \rho < 1$ and $k < \infty$, Equation \ref{eq:frobNormGren} gives
\begin{equation} \label{eq:GrenLeading}
  \frac{1}{\sqrt{M}} \frob{\m{R}_{GS}} = \frac{\sqrt{2} \rho}{(1 - \rho^2) (1 - \rho^M)} \frac{1}{\sqrt{M}} + O\left(\frac{1}{M}\right)
\end{equation}
while the same logic applied to Equation \ref{eq:frobNormSigE} gives
\begin{equation} \label{eq:nearLeading}
  \frac{1}{\sqrt{M}} \frob{\m{R}_{\Sigma_e}} = \frac{\sqrt{2} \rho}{1 - \rho^2} \frac{1}{\sqrt{M}} + O\left(\frac{1}{M}\right)
\end{equation}
The leading terms of which give the relationship for large $M$
\begin{equation*}
 \frac{1}{\sqrt{M}}\frob{\m{R}_{\Sigma_e}} \approx (1 - \rho^M) \frac{1}{\sqrt{M}} \frob{\m{R}_{GS}}.
\end{equation*}

\begin{figure}
  \centering
  \begin{tabular}{cc}
    \includegraphics[scale = 0.45]{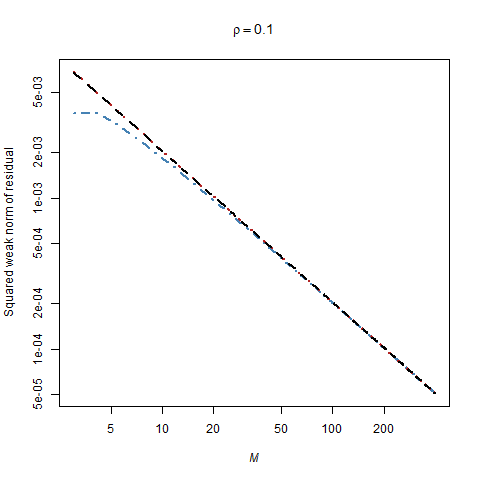} & \includegraphics[scale = 0.45]{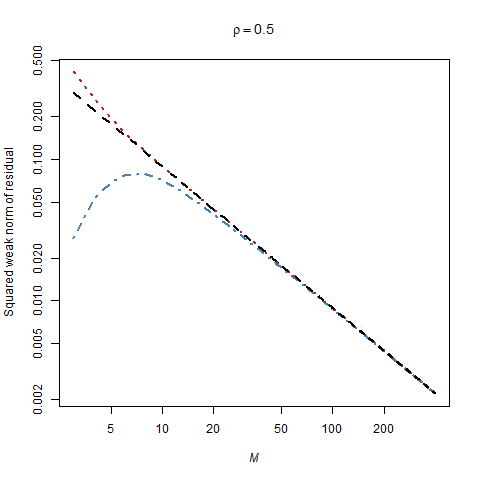} \\
    \includegraphics[scale = 0.45]{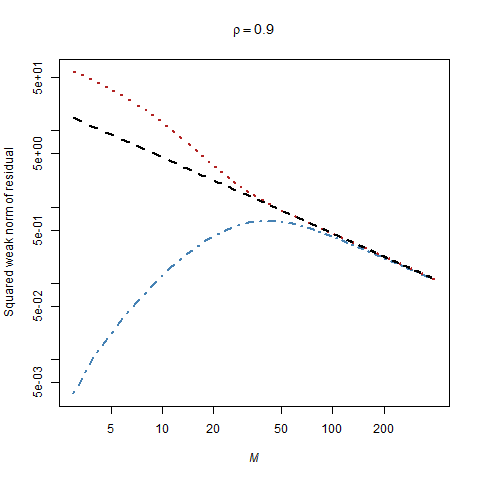} & \includegraphics[scale = 0.45]{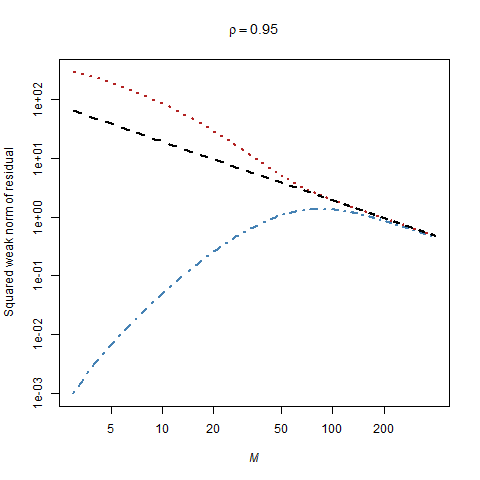}
  \end{tabular}
  \caption{Log-scale plots of the squared Frobenius norms of the residuals $\m{R}_{\Sigma_e}$ and $\m{R}_{GS}$ divided by $M$ for a selection of $\rho$ values over $M = 3$ to 400. The blue dotted line corresponds to $\log \frac{1}{M} \frob{\m{R}_{\Sigma_e}}^2$, the red dot-dashed line to  $\log \frac{1}{M} \frob{\m{R}_{GS}}^2$, and the black dashed line to the log of the leading term given by Equation \ref{eq:nearLeading}} \label{fig:rhoPlots}
\end{figure}
As $1 - \rho^M < 1$, $\m{R}_{\Sigma_e}$ converges to zero faster than $\m{R}_{GS}$ with a greater difference in convergence for $\rho$ values close to 1. A comparison of the values of Equations \ref{eq:frobNormSigE} and \ref{eq:frobNormGren} is shown in Figure \ref{fig:rhoPlots}.

The nearest circulant approximation performs considerably better than the approximation suggested by \cite{gray2006toeplitz}. Using the leading term line as a guide, the nearest circulant matrix is uniformly less than this leading term, while the other is greater than the leading term. Moreover, while the nearest circulant improves in quality for small $M$ as $\rho$ nears 1, the approximation of \cite{gray2006toeplitz} performs poorer relative to the leading term. This makes sense, as for $\rho$ close to one, the absolute difference between $\rho^m$ and $\rho^{(M-m)}$ decreases, making the weighted average entries of $\m{C}_{\Sigma_e}$ closer to the true values. In contrast, the other approximation adds these values and scales them, and so generally has residual terms with larger magnitude. The nearest circulant is orders of magnitude closer in the Frobenius norm for small $M$ in these cases.

\section{Conclusion} \label{sec:conc}

We have derived here a circulant approximation of $\sm{\Sigma}$, $\m{C}_{\Sigma}$, which is optimal in the sense of the Frobenius norm. $\m{C}_{\Sigma}$ is also symmetric, and so is guaranteed to have real eigenvalues. For applications such as multiple test adjustment, this is a highly desirable quality. Additionally, it was proven that only symmetric real circulants are guaranteed to have this property. Beyond multiple test adjustment, any application requiring the eigenvalues of a Toeplitz matrix can use $\m{C}_{\Sigma}$ for its optimality and guaranteed real eigenvalues.

$\m{C}_{\Sigma}$ is also much better than classic approximation provided in \cite{grenanderszego1958} in the case of $\sm{\Sigma}_e$, where $\rho_m = \rho^m$. In particular, as $\rho$ approaches one, $\m{C}_{\Sigma_e}$ performs orders of magnitude better. In the limit of large $M$, both approximations have the same behaviour.

In demonstrating this improved behaviour, another general result was demonstrated. It was proven that the sum $\sum_{n = 1}^{N} n^k p^n$ can be expressed as a linear combination of the first $k$ geometric moments. This expression was provided explicitly for interested readers.

\bibliographystyle{plain}
\bibliography{../Bibliography/fullbib}

\end{document}